\setlist[enumerate]{leftmargin=.5in}
\setlist[itemize]{leftmargin=.5in}
\title{Equivariant Neural Networks for Indirect Measurements%
\thanks{Submitted to the editors March 15, 2024.\funding{N. Heilenkötter acknowledges funding by the Deutsche Forschungsgemeinschaft (DFG) – Project number 281474342.}}}
\author{Matthias Beckmann\thanks{Center for Industrial Mathematics, University of Bremen, Germany \& Department of Electrical and Electronic Engineering, Imperial College London, UK 
  (\email{research@mbeckmann.de}).}
\and Nick Heilenkötter\thanks{Center for Industrial Mathematics, University of Bremen, Germany 
  (\email{heilenkoetter@uni-bremen.de}).}}
\def\K			{\mathbb K}
\def\R			{\mathbb R}
\def\C			{\mathbb C}
\def\V          {\mathcal{V}}
\def\W          {\mathcal{W}}
\def\X          {\mathcal{X}}
\def\Y          {\mathcal{Y}}
\def\Z          {\mathcal{Z}}
\def\Yv         {\Y_V}
\def\S          {\mathrm{S}_V}
\def\A          {\mathscr{A}}
\def\B          {\mathscr{B}}
\def\P          {\mathscr{P}}
\def\Px         {\mathscr{P}_\X}
\def\Py         {\mathscr{P}_\Y}
\def\Pz         {\mathscr{P}_\Z}
\def\p          {\pi}
\def\px         {{\p_\X}}
\def\py         {{\p_\Y}}
\def\G          {G}
\def\g          {g}
\def\h          {h}
\def\e          {e}
\def\inv        {^{-1}}
\def\invT       {^{-T}}
\def\N          {N}
\def\range      {\mathrm{Rg}}
\def\kernel     {\mathrm{Ker}}
\def\NN         {\mathscr{N}}
\def\pg         {\p_\G}
\def\Diff       {\mathrm{D}}
\def\diff       {\mathrm{d}}
\def\dint       {\int_{\Omega_\X}}
\def\Aff        {\mathrm{Aff}^{+}(2)}
\def\GL         {\mathrm{GL}^+(2)}
\def\SO         {\mathrm{SO}(2)}
\def\SE         {\mathrm{SE}(2)}
\def\T          {\mathrm{T}(2)}
\def\D          {\mathcal{D}}
\def\Radon		{\mathscr{R}}
\def\Sphere		{\mathbb S}
\newcommand{\conv}[2]{\ast_{\scaleto{#1\to#2\mathstrut}{5.5pt}}}
\begin{document}

\maketitle

% REQUIRED
\begin{abstract}
In recent years, deep learning techniques have shown great success in various tasks related to inverse problems, where a target quantity of interest can only be observed through indirect measurements by a forward operator.
Common approaches apply deep neural networks in a post-processing step to the reconstructions obtained by classical reconstruction methods.
However, the latter methods can be computationally expensive and introduce artifacts that are not present in the measured data and, in turn, can deteriorate the performance on the given task.
To overcome these limitations, we propose a class of equivariant neural networks that can be directly applied to the measurements to solve the desired task.
To this end, we build appropriate network structures by developing layers that are equivariant with respect to data transformations induced by well-known symmetries in the domain of the forward operator.
We rigorously analyze the relation between the measurement operator and the resulting group representations and prove a representer theorem that characterizes the class of linear operators that translate between a given pair of group actions.
Based on this theory, we extend the existing concepts of Lie group equivariant deep learning to inverse problems and introduce new representations that result from the involved measurement operations.
This allows us to efficiently solve classification, regression or even reconstruction tasks based on indirect measurements also for very sparse data problems, where a classical reconstruction-based approach may be hard or even impossible.
We illustrate the effectiveness of our approach in numerical experiments and compare with existing methods.
\end{abstract}

% REQUIRED
\begin{keywords}
Equivariant Deep Learning, Operator Equivariance, Inverse Problems
\end{keywords}

% REQUIRED
\begin{AMS}
47B38, 65J22, 94A05
\end{AMS}

%%%%%%%%%%%%%%%%%%%%%%%%%%%%%%%%%%

\section{Introduction}
Indirect measurements naturally occur in various applications, where a quantity of interest cannot be observed directly but only through effects of certain processes leading to an {\em inverse problem}.
One prominent example is computerized tomography (CT), where the goal consists in imaging the interior of a scanned object by measuring and processing the attenuation
of X-rays travelling through the object under investigation.
Mathematically, the common setting in inverse problems is the observation of a signal~$x$ through indirect measurements $y = \A x$ with forward operator $\A$.
One particular challenge is that inverse problems are typically ill-posed in the sense that no continuous inverse $\A\inv$ exists so that small noise in the measured data $y$ can lead to large errors in the reconstructed signal $x$.

Recently, deep learning approaches based on neural networks have shown remarkable results in various tasks in the context of inverse problems, like classification or regression tasks.
Such methods typically include the solution of the inverse problem.
In contrast to this, the idea presented in this work is to construct data-efficient neural network structures that operate directly on the measured data $y$ and avoid the need for classical reconstruction or back{\-}projection into the original signal domain.
For this purpose, we follow the concept of equivariant neural networks that leverage the symmetries that may be present in the measurements $y$.

This might become useful especially in settings where only very sparse measurement data is available and, as a result, good reconstructions without additional data-knowledge are infeasible.
A motivating example for our work is the thickness measurement of tubes using a sparse two-angle fan-beam CT geometry, as displayed in Figure~\ref{fig:intro_example}.
In this context, one cannot expect to obtain good reconstructions (which could then be handled by learned regression models) by classical methods.
However, a full reconstruction is not needed, only scalar values for the minimal and maximal thickness are desired.
The idea we follow in this paper is then to circumvent any classical reconstruction and construct an appropriate learned regression model that takes the sparse sinogram data as an input.

\begin{figure}[t]
\centering
\includegraphics[width=0.6225\textwidth]{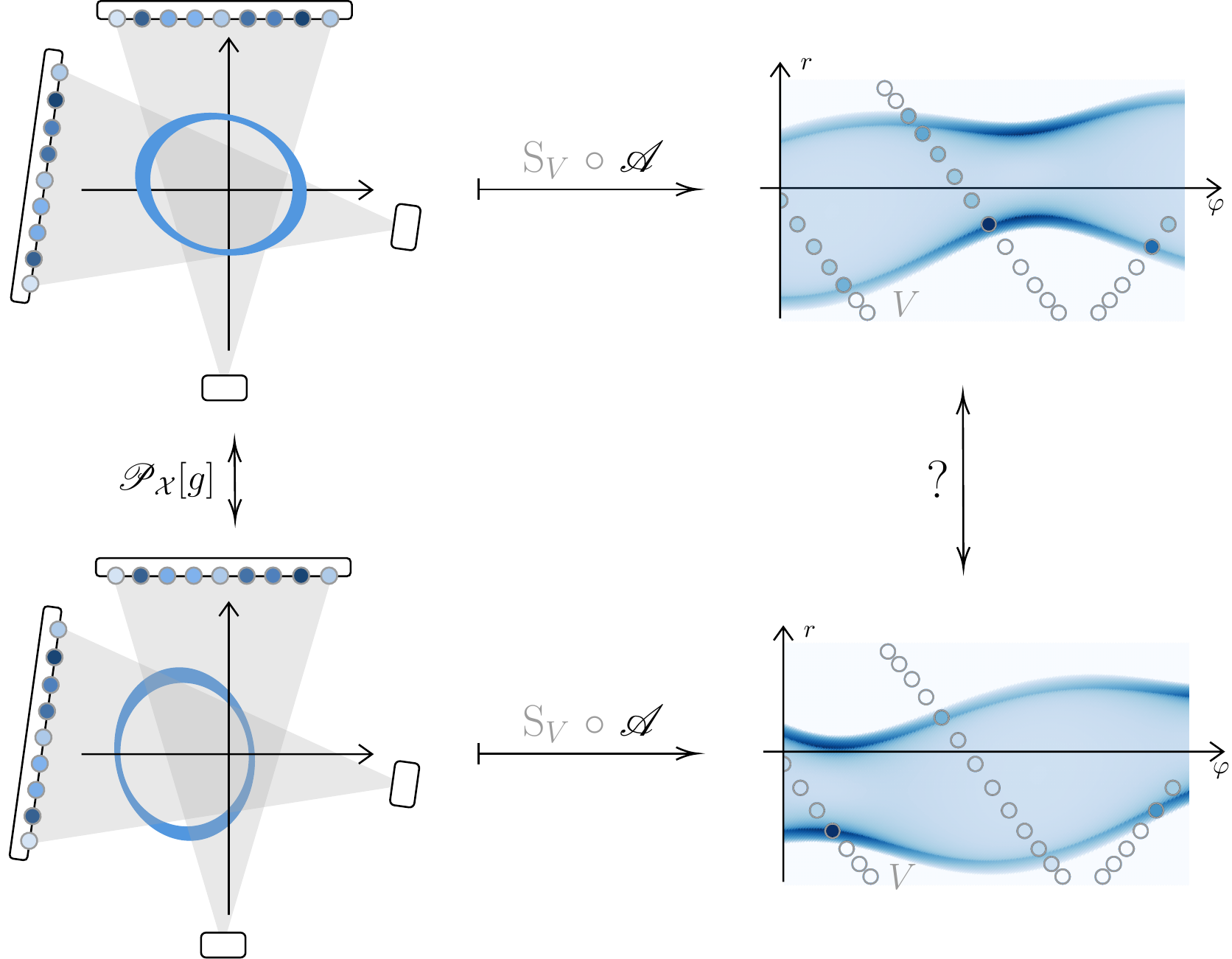}
\caption{The setting of our approach, illustrated for 2-angle fan-beam Radon measurements of tubes: symmetries in $\Y$ (right) are induced by well-known group representations in the physical space $\X$ (left). However, the data is measured only at a finite set of sensor points $V$, depicted by gray circles.}
\label{fig:intro_example}
\end{figure}

On this path, several theoretical and numerical challenges arise.
First of all, to construct equivariant networks, a characterization of sensible symmetry transforms on the measured signals $y$ is required.
Due to the discrete nature of measurements in practice, symmetries often occur only in an approximate way, requiring a modelling scheme that allows to characterize the underlying continuous symmetries.
Finally, one has to construct neural networks that respect these symmetries while operating on the discrete measured data.
In this work, we will tackle all these challenges and demonstrate a successful construction of data-efficient network structures that operate on indirect measurements.

\subsection{Literature Review}
The exploitation of symmetries to improve the generalization performance of deep learning models has been a large field of research during the last years.
Approaches include data augmentation~\cite{LeCun1995}, specialized training schemes or loss functions~\cite{Chen2021, Dittmer2022} and specially tailored equivariant network structures, on which we focus in the following.

Motivated by the success of convolutional neural networks (CNNs), the research on generalized group-equivariant neural network architectures has led to network structures that show promising results on various types of data~\cite{Finzi2020, Kondor2018}.
Group-equivariance is commonly perceived as a strong inductive bias for the construction of data-efficient network architectures.
Its success is believed to rely on two basic properties~\cite{Kondor2018}: On the one hand, equivariance incorporates prior knowledge on the data to provide guarantees for the extrapolation of results to unseen transformed data points and enables parameter-sharing. On the other hand, the non-trivial kernel size of convolutions together with downsampling layers allow to learn local features and combine those features on coarser scales.

\smallskip

In 2016, Cohen and Welling~\cite{Cohen2016} constructed generalized CNNs for image processing that are equivariant with respect to discrete groups.
Since then, several extensions and variants have been proposed.
For example, in~\cite{Cohen2017} the authors develop memory-efficient steerable convolution kernels based on group representation theory.
Lately, Finzi et al.~\cite{Finzi2020} constructed neural networks that are equivariant with respect to arbitrary Lie groups.
To handle the continuous groups on point clouds, they parametrized convolution kernels by small fully-connected networks.
Since we aim to treat arbitrary discretizations, we also follow this approach for the construction of equivariant networks.

In inverse problems, a vast range of classical theory and numerical methods concerned with the reconstruction of $x$ from possibly noisy data $y^\delta = y + \eta^\delta$ exists~\cite{Engl2000, Kaipio2005}.
Symmetries have been a central tool in the development of methods, ranging from steerable wavelets~\cite{Chenouard2012} to regularization by equivariance~\cite{Tang2022}.

During the last years, diverse deep learning methods have shown to improve the quality of reconstructions~\cite{Arridge2019, Baguer2020}.
The incorporation of symmetries in such reconstructions turned out to be a valuable prior and has been studied in various approaches~\cite{Celledoni2021, Chen2021, Dax2022, Tang2022, Chen2023}.

In~\cite{Celledoni2021}, the authors use an equivariant neural network architecture as proximal operator to improve the reconstruction quality obtained by learned iterative schemes.
In this context, possible issues caused by non-equivariant measurement operators that partially regard information are shortly discussed.
On the contrary, the reconstruction approach in~\cite{Chen2021} makes explicit use of this property to design specialized loss functions for a self-supervised learning strategy for end-to-end reconstructions.
In our approach, this effect is handled by the separation of continuous forward measurement operators and their partial evaluation, e.g., due to discretization. 

Other deep learning based reconstruction approaches include symmetry knowledge by the direct use of equivariant networks for post-processing of classical reconstructions~\cite{Winkels2019} or in the construction of learned regularizations in variational methods~\cite{Tang2022}.
To the best of our knowledge, none of the existing works has tackled the problem of directly handling induced symmetries in indirect measurements.

\subsection{Overview of the Paper}

The outline of this paper is as follows.
In Section~\ref{sec:preliminaries} we explain the general setting and introduce the necessary mathematical preliminaries for the analysis of group symmetries in indirect measurements in a compact way.
Section~\ref{sec:math} is the theoretical main part of this paper and contains two mathematical results that investigate the connection between measurement operators and the symmetries that may appear in the measured data.
In particular, Theorem~\ref{thm:equivariance} sharply characterizes all linear operators that translate \mbox{between} given symmetry transforms.
For illustration, we apply this result to the classical Radon operator that models the measurement process in computerized tomography.
Extending on this knowledge, in Section~\ref{sec:layers} we build equivariant neural networks with respect to new types of transforms that have not been tackled before and treat the challenges that appear in the context of applied inverse problems, where only partial measurements are available.
Finally, in Section~\ref{sec:numerics} we apply our constructed networks that can operate directly on the measured data and perform numerical experiments on classification and regression tasks, where we are able to show improved generalization properties when compared to existing methods that are based on learned post-processing of classical reconstructions.

\section{Mathematical Setting and Preliminaries}\label{sec:preliminaries}

\subsection{General Setting}

In this paper, we deal with linear inverse problems $y = \A x$ that can be modelled by a continuous linear forward operator
\begin{equation*}
\A: \X \to \Y
\end{equation*}
mapping between spaces of test functions $\X = \D(\Omega_\X, \K^{n_\X})$ on some domain $\Omega_\X \subseteq \R^{d_\X}$ and $\Y = \D(\Omega_\Y, \K^{n_\Y})$ on some domain $\Omega_\Y \subseteq \R^{d_\Y}$, where $\K \in \{\R, \C\}$.
Since we focus on scalar data in the application, we will abbreviate notation and mostly omit the signal range dimension, i.e. we write $\X = \D(\Omega_\X)$, $\Y = \D(\Omega_\Y)$.
Keep in mind that we implicitly also treat vector field transforms and, hence, kernels are matrix-valued.
In Section~\ref{sec:layers} we will add a discretization scheme to this continuous setting to deal with discrete and potentially noisy measurements.

The ultimate aim of our work is to construct artificial neural networks
\begin{equation*}
\NN_\theta: \Y \to \Z
\end{equation*}
that can be efficiently applied to the measured data and optimized to solve several post-processing tasks while avoiding explicitly solving the inverse problem in a first step.
To this end, we will make use of symmetries in the measurement space $\Y$ that are known in the underlying source space $\X$ and transformed by the forward operator $\A$.

\subsection{Symmetries and Group Representations}

In many cases, continuous local or global symmetries of a source signal $x \in \X$ can be described by means of a known representation~$\P$, i.e., a homomorphism $\P: \G \to \mathrm{GL}(\X)$, of a Lie group $\G$ on $\X$.
To emphasize the underlying space $\X$ we also write $\P \equiv \Px$.
For vector spaces $\V, \W$, we call an operator $\B: \V \to \W$ {\em equivariant} with respect to a representation $\P_\V$ on $\V$ if a representation $\P_\W$ on $\W$ exists such that for all $x \in \V$ and $\g \in \G$ we have
\begin{equation*}
\P_\W[\g]\B(x) = \B(\P_\V[\g]x).
\end{equation*}

A common type of representations $\P$ on function spaces $\V \subseteq \{\R^d \supseteq  \Omega \to \R^n\}$ are so-called {\em domain transforms}, which are induced by a proper Lie group action $\pi$, i.e., a group homomorphism $\p: \G \to \{\Omega \to \Omega\}$ with $\pi[e]= I_d$, acting on the functions $x \in \V$ by (possibly non-linearly) transforming their domain~$\Omega$ so that
\begin{equation*}
(\P[\g]x)(u) = x(\p[\g]^{-1}(u)).
\end{equation*}
Here, a group action is called proper if the mapping $\G \times \Omega \to \Omega \times \Omega$, $(\g,u) \mapsto (\p[\g](u),u)$ is proper, i.e., pre-images of compact sets are compact.

To deal with more general transforms that can appear in measurements, we introduce the notion of {\em generalized domain transforms}, which are representations that can additionally act locally and linearly on the range of the functions via
\begin{equation*}
(\P[\g] x)(u) = p[\g](u) \, x(\p[\g]\inv(u)),
\end{equation*}
where $p: \G \to \mathcal{C}^\infty(\Omega, \mathrm{GL}(\K^n))$ satisfies $p[\g\h](u) = p[\g](u) \, p[\h](\p[\g]\inv(u))$ and $p[e] = I_n$, which is also the sufficient and necessary condition for the induced mapping $\P$ to define a group representation on $\V$.
In this case we also write $\P \equiv \P(\pi,p)$ to emphasize the mappings $\pi: \G \to \{\Omega \to \Omega\}$ and $p: \G \to \mathcal{C}^\infty(\Omega, \mathrm{GL}(\K^n))$.
While representations in the range of functions are also considered for vector fields e.g. in~\cite{Cohen2019}, the symmetries that appear in indirect measurements are new in the way that they can contain local variations also for scalar signals. This behaviour is properly modelled by the introduced class of representations.

All (generalized) domain transforms are primarily characterized by the action of $\p$ on the domain $\Omega$.
If for all $u, u_0 \in \Omega$ there exists $\g \in \G$ such that $\p[\g](u_0) = u$, the set $\Omega$ is called a {\em homogeneous space} of $\p$.
In this case, the domain $\Omega$ can be parametrized by the group elements once an origin $u_0 \in \Omega$ has been fixed.
To this end, we consider the {\em stabilizer subgroup}
\begin{equation*}
\N_\p(u_0) = \{n \in \G \mid \p[n](u_0) = u_0\}.
\end{equation*}
of all elements that keep the origin fixed.
Then, the set $\G/\N_\p(u_0) = \{g\N_\p(u_0) \mid g \in \G\}$ of {\em left cosets} can be identified with the domain, i.e., we have $\G/\N_\p(u_0) \cong \Omega$.
This observation will be an important tool to establish a relation between equivariant operators and convolutions.
For the sake of brevity, in the following we write $\g_u$ for a representative of the group elements that move the origin $u_0$ to $u$, i.e., $\p[\g_u](u_0) = u$, and specify its choice once necessary.

\section{Mathematical Results}\label{sec:math}

As the overall goal of this work is to define equivariant neural networks on the measurements $\A x$, $x \in \X$, it is desirable to describe the symmetries in the measurement space $\Y$.
Given a symmetry transform $\Px$ on the source space $\X$, we consider the transforms that are induced by the measurement, i.e., for all $\g \in \G$, we search for a mapping $\Py[\g]: \range(\A) \to \range(\A)$ that satisfies
\begin{equation}\label{eq:equivariance}
\Py[\g] \A x = \A \Px[\g] x
\end{equation}
for all $x \in \X$.
The following {\em visibility condition} gives a sharp criterion for the existence and properties of such a mapping.

\begin{theorem}[visibility condition]\label{thm:visibility}
For a group representation $\Px$ on $\X$ and a measurement operator $\A: \X \to \Y$ the following statements are equivalent.
\begin{enumerate}[label=(\roman*)]
\item The mapping $\Py[\g]: \range(\A) \to \range(\A)$ given by~\eqref{eq:equivariance} is well-defined for all $\g\in\G$. \label{item:vis_i}
\item Equation~\eqref{eq:equivariance} defines a group representation $\Py: \G \to (\range(\A) \to \range(\A))$ on $\Y$. \label{item:vis_ii}
\item We have $\kernel(\A\Px[\g]) = \kernel(\A)$ for all $\g \in \G$. \label{item:vis_iii}
\end{enumerate}
\end{theorem}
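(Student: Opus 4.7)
My plan is to establish the cyclic chain of implications (ii) $\Rightarrow$ (i) $\Rightarrow$ (iii) $\Rightarrow$ (ii). The first implication is immediate, since a group representation consists by definition of well-defined maps.

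For (i) $\Rightarrow$ (iii), I would first observe that well-definedness of $\Py[\g]$ on $\range(\A)$ translates, by linearity of $\A$ and $\Px[\g]$, into the containment $\Px[\g]\kernel(\A) \subseteq \kernel(\A)$, which is the same as $\kernel(\A) \subseteq \kernel(\A\Px[\g])$. The reverse inclusion is the subtle point: I would invoke (i) for the group element $\g\inv$ to also obtain $\Px[\g\inv]\kernel(\A) \subseteq \kernel(\A)$. Then for any $x \in \kernel(\A\Px[\g])$ the element $\Px[\g]x$ lies in $\kernel(\A)$, so applying $\Px[\g\inv]$ and using the representation property $\Px[\g\inv]\Px[\g] = \Px[\e] = I$ shows $x \in \kernel(\A)$.

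For (iii) $\Rightarrow$ (ii), the equality of kernels ensures that $\Py[\g]$ is well-defined and linear on $\range(\A)$, since any two preimages of the same point under $\A$ differ by an element of $\kernel(\A)$ which is mapped into $\kernel(\A)$ by $\Px[\g]$. The map lands in $\range(\A)$ by construction, and injectivity on $\range(\A)$ follows from $\kernel(\A\Px[\g]) = \kernel(\A)$. The homomorphism property reduces to the direct chain
\begin{equation*}
\Py[\g\h]\A x = \A\Px[\g\h]x = \A\Px[\g]\Px[\h]x = \Py[\g]\A\Px[\h]x = \Py[\g]\Py[\h]\A x,
\end{equation*}
together with $\Py[\e]\A x = \A\Px[\e]x = \A x$, so that $\Py[\e]$ is the identity. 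Bijectivity of $\Py[\g]$ on $\range(\A)$ then comes for free via $\Py[\g]\Py[\g\inv] = \Py[\e] = I$.

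The main obstacle is the sharpness of the kernel condition in the (i) $\Rightarrow$ (iii) direction: well-definedness of a single $\Py[\g]$ only yields the inclusion $\kernel(\A) \subseteq \kernel(\A\Px[\g])$, so it is essential to exploit that (i) is assumed for all group elements simultaneously and to combine it with the group-inverse and the representation property of $\Px$ to close the argument. Everything else amounts to bookkeeping with the representation axioms.
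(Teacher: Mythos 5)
Your proposal is correct and uses essentially the same ingredients as the paper's proof: the paper closes the cycle in the order (i)~$\Rightarrow$~(ii)~$\Rightarrow$~(iii)~$\Rightarrow$~(i), with the same homomorphism computation, the same appeal to the group inverse $\g\inv$ to obtain the second kernel inclusion (phrased there as a contradiction inside the step (ii)~$\Rightarrow$~(iii)), and the same well-definedness argument from kernel equality. The only differences are the orientation of the cycle and your direct double-inclusion phrasing of the kernel step, which are cosmetic.
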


\begin{proof}
\ref{item:vis_i}$\implies$\ref{item:vis_ii}:
Let $\Py: \G \to (\range(\A) \to \range(\A))$ be defined via~\eqref{eq:equivariance}.
Then, $\Py$ is a group homomorphism as, for $y = \A x \in \range(\A)$, we have $\Py[\e] y = \Py[\e] \A x = \A x = y$ and, for arbitrary $\g, \h \in \G$, direct computations show that
\begin{align*}
\Py[\g\h]y &= \Py[\g\h]\A x = \A\Px[\g\h]x = \A\Px[\g]\Px[\h]x\\
&= \Py[\g]\Py[\h]\A x = \Py[\g]\Py[\h]y.
\end{align*}
Moreover, for $\g \in \G$ the mapping  $\Py[\g]: \range(\A) \to \range(\A)$ is linear as for $\lambda_1, \lambda_2 \in \R$ and $y_1 = \A x_1, y_2 = \A x_2 \in \range(\A)$ holds that
\begin{align*}
\Py[\g](\lambda_1 y_1+\lambda_2 y_2) &= \Py[\g]\A(\lambda_1 x_1+\lambda_2 x_2) = \A\Px[\g](\lambda_1 x_1+\lambda_2 x_2)\\
&= \lambda_1\A\Px[\g]x_1+ \lambda_2\A\Px[\g] x_2 = \lambda_1\Py[\g]y_1 + \lambda_2\Py[\g]y_2.
\end{align*}

\ref{item:vis_ii}$\implies$\ref{item:vis_iii}:
Let $\Py: \G \to (\range(\A) \to \range(\A))$ be the above group representation and let $\g \in \G$ such that
\begin{equation*}
\kernel(\A\Px[\g]) \neq \kernel(\A).
\end{equation*}
Then, there is $x \in \kernel(\A)$ with $x \notin \kernel(\A\Px[\g])$ or $\hat{x} \in \kernel(\A\Px[\g])$ with $\hat{x} \notin \kernel(\A)$.
In the first case, however, we have
\begin{equation*}
\A\Px[\g] x = \Py[\g]\A x = 0
\end{equation*}
in contradiction to $x \notin \kernel(\A\Px[\g])$.
In the second case, we similarly get the contradiction
\begin{equation*}
\A\hat{x} = \A\Px[\g\inv]\Px[\g]\hat{x} = \Py[\g\inv]\A\Px[\g]\hat{x}.
\end{equation*}

\ref{item:vis_iii}$\implies$\ref{item:vis_i}:
Let $\kernel(\A\Px[\g]) = \kernel(\A)$ for all $\g \in \G$ and let $y = \A x_1 = \A x_2$ for $x_1, x_2 \in \X$.
Then, $x_1-x_2 \in \kernel(\A) = \kernel(\A\Px[\g])$ and, consequently, for arbitrary $\g \in \G$ follows that
\begin{equation*}
\A\Px[\g]x_1 = \A\Px[\g]x_2
\quad \implies \quad
\Py[\g]\A x_1 = \Py[\g]\A x_2,
\end{equation*}
which shows that $\Py[\g]: \range(\A) \to \range(\A)$ is well-defined.
\end{proof}

The sharp condition $\kernel(\A\Px[\g]) = \kernel(\A)$ for all $\g \in \G$ can be interpreted in terms of the visibility of information in the measurements:
If a feature in the source data can be made visible or invisible in the measurement by the application of a group transform, it is impossible to characterize $\A \Px[\g] x$ only in terms of $x$ and $\g$.
Otherwise, the output can be determined solely from $x$ and $\g$ and, in addition, behaves well in the sense of a group representation structure.
An example for the violation of the visibility condition due to partial measurements is depicted in Figure~\ref{fig:radon_digit}.
Due to the previous theorem, classical reconstructions that do not incorporate any additional assumptions on the data cannot be equivariant with respect to group representations in the source space.
\begin{figure}
\centering
\includegraphics{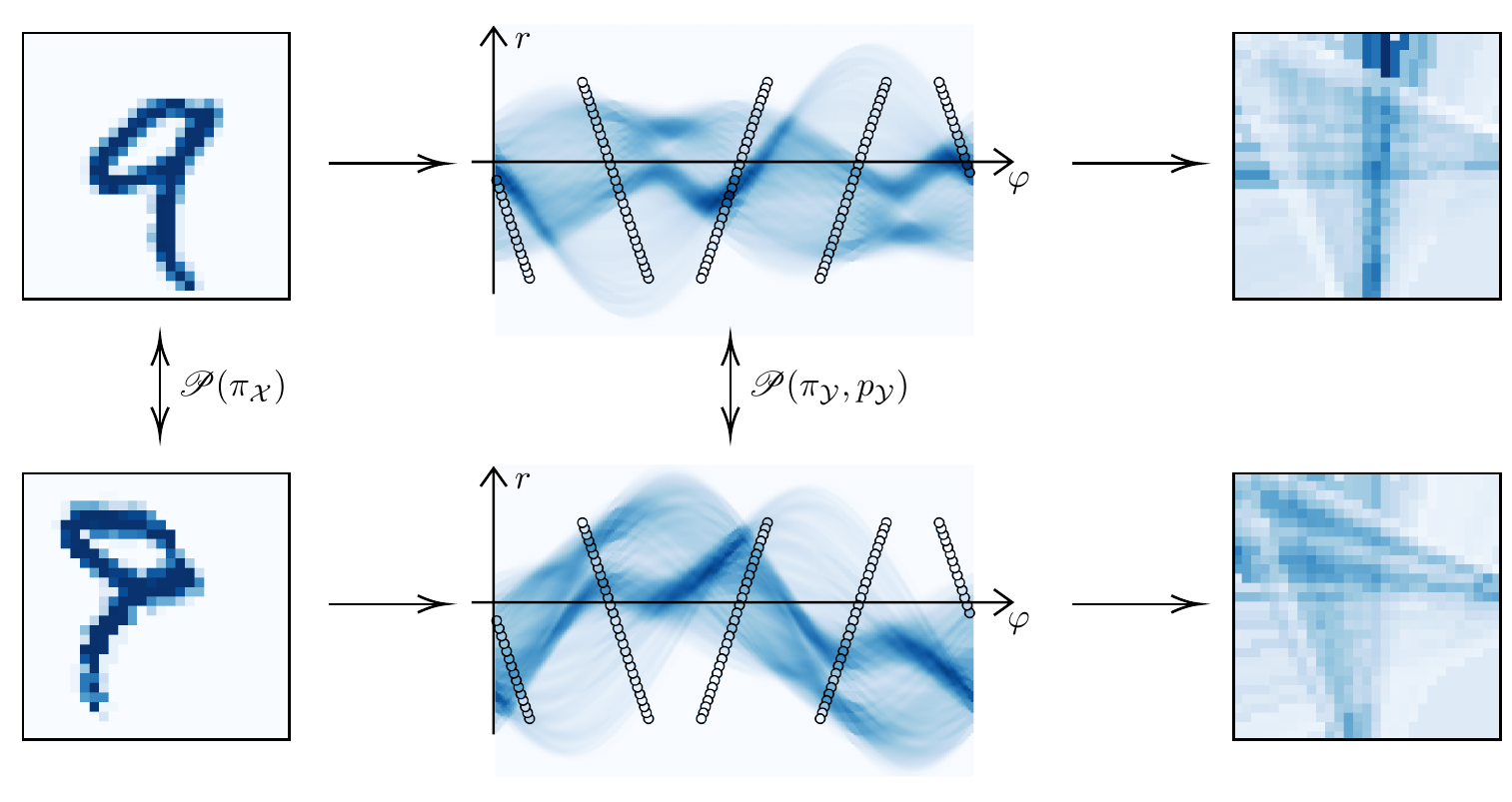}
\caption{Illustration how classical reconstructions (right) of an MNIST digit based on sparse measurements (center, visualized by circles) behave under transformations from $\Aff=\T\rtimes\GL$ (left). In this case, the resulting representation in the full sinograms (center) is a generalized domain transform.}
\label{fig:radon_digit}
\end{figure}
As a consequence, we will focus on applications in which the continuous full measurement operator can be modelled such that it satisfies the visibility condition.
Violations of the visibility condition that are caused by the discretization of the operator are discussed in more detail in Section~\ref{sec:layers}.

In a finite-dimensional setting, the authors of~\cite{Chen2021} state a related condition, where the non-fulfilment of the visibility condition is an explicit prerequisite for their approach to be of advantage for reconstruction.
This is in contrast to our approach, where equivariance is built explicitly into the network structure and requires the fulfilment of the visibility condition.

Our next goal is to exactly characterize the class of measurement operators that transform between certain group representations.
For this purpose, we restrict ourselves to the case of generalized domain transforms as group representation in the source space.
In practice, the representations in the source space are well-known and the notion of generalized domain transforms covers all symmetries that are commonly considered in the known physical domain, cf.~\cite{Cohen2016, Finzi2020, Cohen2019}.
In addition, we assume that the domain $\Omega_\X$ of the input signal is a homogeneous space of the corresponding group action on $\X$.
This assumption, however, can be weakened to obtain a more general but weaker result by treating the domain as a union of orbits, which are homogeneous spaces on their own.
To treat general linear measurement operators with a priori unknown induced symmetries, we do not restrict the representations $\Py$ in any way.

In the following, for a distribution $a \in \D'(\Omega_\X)$ and a function $x \in \D(\Omega_\X)$, we use the notation
\begin{equation*}
a(x) = \int_{\Omega_\X} a(u) \, x(u) \: \diff u.
\end{equation*}
For a group representation $\Px: \G \to (\D(\Omega_\X) \to \D(\Omega_\X))$ we define its {\em adjoint representation} $\Px^\ast: \G \to (\D(\Omega_\X) \to \D(\Omega_\X))$ via
\begin{equation*}
\dint (\Px^\ast[\g]x)(u)\ \varphi(u) \: \diff u = \dint x(u)\ (\Px[\g]\varphi)(u) \: \diff u
\quad \forall \ \varphi \in \D(\Omega_\X)
\end{equation*}
and, with this, for $a \in \D'(\Omega_\X)$ and $g \in \G$ we define $\Px[g]a \in \D'(\Omega_\X)$ via
\begin{equation*}
\Px[g]a(x) = a(\Px^\ast[g]x)
\quad \forall \ x \in \D(\Omega_\X).
\end{equation*}

\begin{theorem}[Equivariant measurement operator is a convolution.]\label{thm:equivariance}
Let $\P_\X \equiv \P_\X(\pi_\X,p_\X)$ be a generalized domain transform such that $\Omega_\X$ is a homogeneous space of $\px$ and let $\A: \D(\Omega_\X) \to \D(\Omega_\Y)$ be a continuous linear measurement operator.
Then, the equivariance condition~\eqref{eq:equivariance} holds if and only if $\A$ is given by a {\em generalized convolution operator}
\begin{equation}\label{eq:convolution}
\begin{split}
(\A x)(v)=&\ (a \conv{\X}{\Y} x)(v)\\
\coloneqq& \dint (\Py[\g_u]\,a)(v)\ p_\X[\g_u\inv](u_0)\,x(u)\ |\det(\Diff\px[\g_u](u_0))|\inv \: \diff u,
\end{split}
\end{equation}
where the distributional kernel $a \in \D'(\Omega_\Y)$ fulfils
\begin{equation}\label{eq:constraint}
(\Py[\g_un]\, a)(v) = (\Py[\g_u]\, a)(v)\ p_\X[n](u_0)\ |\det(\Diff\px[n](u_0))|
\end{equation}
in the sense of distributions for all $n \in \N_\px(u_0)$.
\end{theorem}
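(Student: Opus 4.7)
The plan is to exploit the homogeneous-space identification $\Omega_\X \cong \G/\N_\px(u_0)$ to concentrate all information of $\A$ at the single base point $u_0$, thereby reducing $\A$ to one distribution $a \in \D'(\Omega_\Y)$. Two algebraic identities will drive the bookkeeping throughout: the cocycle relation for $p_\X$, which specialised to $\g=\g_u$, $\h=\g_u\inv$ at $u=\px[\g_u](u_0)$ yields $p_\X[\g_u](u)\inv = p_\X[\g_u\inv](u_0)$; and the multiplicative chain rule
\begin{equation*}
|\det \Diff\px[\g\h](u)| = |\det \Diff\px[\g](\px[\h](u))|\cdot|\det \Diff\px[\h](u)|.
\end{equation*}

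For the ``if'' direction, I would substitute~\eqref{eq:convolution} into $(\A \Px[\g] x)(v)$, apply the change of variables $u = \px[\g](w)$ to the transported integrand, and choose the natural representatives $\g_{\px[\g](w)} = \g\,\g_w$. The cocycle identity for $p_\X[\g\g_w]$ together with the chain rule collapses the integrand into $(\Py[\g\g_w]\,a)(v)\,p_\X[\g_w\inv](u_0)\,x(w)\,|\det \Diff\px[\g_w](u_0)|\inv$, after which the homomorphism property of $\Py$ pulls $\Py[\g]$ outside the integral to recover $(\Py[\g]\A x)(v)$. The verification is essentially algebraic bookkeeping once the two identities above are in hand.

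For the ``only if'' direction, the Schwartz kernel theorem supplies a distributional kernel $K \in \D'(\Omega_\Y \times \Omega_\X)$ representing $\A$ via $(\A x)(v) = \int K(v,u)\,x(u)\,\diff u$. Testing~\eqref{eq:equivariance} against an arbitrary $x \in \D(\Omega_\X)$ and changing variables in the transported integral produces the distributional identity
\begin{equation*}
(\Py[\g]\,K(\cdot,u))(v) = K(v,\px[\g](u))\,p_\X[\g](\px[\g](u))\,|\det \Diff\px[\g](u)|.
\end{equation*}
Defining $a(v) \coloneqq K(v,u_0)$ and specialising to $\g = \g_u$, homogeneity lets one reach every $u \in \Omega_\X$; solving for $K(v,u)$ and using the cocycle identity to swap $p_\X[\g_u](u)\inv$ for $p_\X[\g_u\inv](u_0)$ delivers exactly~\eqref{eq:convolution}.

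Well-definedness of the resulting formula as a function of $u$ (independent of the chosen representative $\g_u$) then forces the compatibility condition on $a$: substituting $\g_u n$ for $\g_u$ with $n \in \N_\px(u_0)$, expanding $p_\X[(\g_u n)\inv](u_0)$ and $|\det \Diff\px[\g_u n](u_0)|$ via the cocycle identity and the chain rule, and cancelling the common factor $p_\X[\g_u\inv](u_0)\,|\det \Diff\px[\g_u](u_0)|\inv$ yields precisely~\eqref{eq:constraint}, after using $p_\X[n\inv](u_0)\inv = p_\X[n](u_0)$. I expect the main technical delicacy to lie in the rigorous interpretation of the restriction $a = K(\cdot,u_0)$, since distributions do not in general admit restrictions to lower-dimensional slices; however, the equivariance relation already propagates $K$ along the orbit of $u_0$ as a controlled smooth transport in the $u$-direction, so $a$ can be defined by pairing $K$ with smooth bump approximations of $\delta_{u_0}$, with existence of the limit justified by the transported formula itself.
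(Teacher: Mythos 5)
Your proposal is correct and follows essentially the same route as the paper: the ``if'' direction by change of variables plus the cocycle and chain-rule identities, and the ``only if'' direction via the Schwartz kernel theorem, restriction of the kernel to the base point $u_0$, and derivation of~\eqref{eq:constraint} from independence of the representative $\g_u$. The technical delicacy you flag — that the slice $a = K(\cdot,u_0)$ of a distributional kernel needs justification — is exactly the point the paper handles, and your proposed fix (mollifying with bump approximations of $\delta_{u_0}$ transported along the group orbit and passing to the limit using continuity of $\A$) is precisely the paper's Dirac-sequence argument with $A_k$ and $a_k = A_k(u_0,\cdot)$.
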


Before we proceed to the proof of this theorem, we wish to pass the following remarks.

The condition~\eqref{eq:constraint} guarantees that the representation in equation~\eqref{eq:convolution} is independent of the choice of the representative $\g_u$ of the group elements that move $u_0$ to $u$.

The theorem has several implications for the handling of symmetries in inverse problems.
For instance, in the setting of indirect measurements, it enables us to deduce the induced representations from integral measurement operators and to exactly state the set of operators that translate between these symmetries.

The expression {\em generalized convolution} is motivated by the following remark.

\begin{remark}
If $\Px \equiv \Px(\px)$ and $\Py \equiv \Py(\py)$ are conventional domain transforms, representation~\eqref{eq:convolution} reads
\begin{equation*}
(\A x)(v) = \int_{\Omega_\X} a(\py[\g_u\inv](v))\ x(u)\ |\det(\Diff\px[\g_u](u_0))|\inv \: \diff u.
\end{equation*}
Moreover, condition~\eqref{eq:constraint} reduces to
\begin{equation*}
a(\py[\g_un]\inv v) = a(\py[\g_u]\inv (v)) \,|\det(\Diff\px[n](u_0))|
\quad \forall \ n \in \N_\px(u_0)
\end{equation*}
and, hence, constrains the kernel along the orbits of the stabilizer of $\px$ in $\Y$.
In particular, the special case $\Omega_\X = \R^d = \Omega_\Y$ with origin $u_0 = 0$ and translation group $G = \R^d$ with $\g_u = u$ and group action $\p[g](u) = u+g$ recovers the classical convolution operator
\begin{equation*}
(\A x)(v) = \int_{\R^d} a(v-u)\ x(u) \: \diff u
\end{equation*}
with convolution kernel $a \in \D'(\R^d)$.
\end{remark}

For the construction of equivariant neural networks, Theorem~\ref{thm:equivariance} characterizes the structure of all possible linear layers.
In this context, a special case of this theorem for discrete domain transforms is well-known in the literature and has been proven, e.g., in~\cite{Kondor2018}. 
In~\cite{Cohen2019}, the authors embed a variant of this theorem into a more general and group theoretical framework of Mackey functions for equivariant neural networks, where their notion of transforms on vector fields can be viewed as a realization of our concept of generalized domain transforms.
However, in contrast to this, we provide a rigorous proof in the context of function spaces, where we generalize to distributional kernels, which is more suitable for forward operators in inverse problems.
In addition, compared to existing settings, we do not restrict the group transforms in the output space $\Y$ in any way.
This enables us to treat symmetries of all measurement operators (that satisfy the visibility condition) for known transforms in the source space $\X$.
Example~\ref{ex:radon} below will illustrate the consequences of this theorem for the special case of the classical Radon transform of functions on $\R^2$.

In Section~\ref{sec:layers}, we will construct equivariant linear layers that are suited for the application to indirect measurements and characterized by our above theorem, which is formulated in the continuous setting.

\begin{proof}[Proof of Theorem \ref{thm:equivariance}]
In the following, for the sake of brevity, we set
\begin{equation*}
\omega[\g](u) = |\det(\Diff\px[\g](u))|
\end{equation*}
for $\g \in \G$ and $u \in \Omega_\X$.

For the proof of the 'if'-direction let $a \in \D'(\Omega_\X)$ satisfy~\eqref{eq:constraint}.
Then, we can choose any representative $\g_u \in \g_u\N_{\px}(u_0)$ and define $\A: \D(\Omega_\X) \to \D(\Omega_\Y)$ via~\eqref{eq:convolution}.
To verify~\eqref{eq:equivariance}, we compute the image of a transformed $x \in \D(\Omega_\X)$ under $\A$ by a change of variables in the distribution and use the fact that $\omega[\g\h](u) = \omega[g](\px[h](u))\ \omega[\h](u)$.
Then, direct calculations indeed show that
\begin{align*}
(a \conv{\X}{\Y} \Px[\g]x)(v) &= \int_{\Omega_\X}(\Py[\g_u]\,a)(v)\ (\Px[\g_u\inv\g]x)(u_0)\ \omega[\g_u](u_0)\inv\,\diff u\\
&= \int_{\Omega_\X}(\Py[\g\g_u]\,a)(v)\ (\Px[\g_u\inv]x)(u_0)\ \omega[\g\g_u](u_0)\inv\,\omega[\g](u)\,\diff u\\
&= \int_{\Omega_\X}(\Py[\g]\Py[\g_u]\,a)(v)\ (\Px[\g_u\inv]x)(u_0)\ \omega[\g_u](u_0)\inv\,\diff u \\
&= (\Py[\g](a \conv{\X}{\Y} x))(v).
\end{align*}

For the proof of the 'only if'-direction let $\A: \D(\Omega_\X) \to \D(\Omega_\Y)$ satisfy the equivariance condition~\eqref{eq:equivariance}. 
Due to the continuity of $\A$ and the Schwartz kernel theorem~\cite{Hormander1990}, we have the representation
\begin{equation*}
(\A x)(v) = \dint A(u,v) \, x(u) \: \diff u,
\end{equation*}
where $A(\cdot,v) \in \D'(\Omega_\X)$ is a distribution for every $v \in \Omega_\Y$.
Let $(\delta_k)_{k \in \mathbb{N}}$ be a Dirac sequence of smooth functions $\delta_k \in \D(\Omega_\X)$ and, for $k \in \mathbb{N}$, consider the approximate kernel
\begin{equation*}
A_k(w,v) = \dint A(u,v)\ (\Px[\g_w]\,\delta_k)(u)\ p_\X[\g_w\inv](u_0)\ \omega[\g_w](u_0)\inv \: \diff u,
\end{equation*}
which satisfies $A_k(\cdot, v) \in \D(\Omega_\X)$ for all $v \in \Omega_\Y$ as well as $A_k(u,\cdot) \in \D(\Omega_\Y)$ for all $u \in \Omega_\X$.
The induced operator $\A_k: \D(\Omega_\X) \to \D(\Omega_\Y)$ can be rewritten as
\begin{align*}
(\A_k x)(v) &= \dint A_k(w,v) \, x(w) \: \diff w\\
&= \dint\dint A(u,v)\ (\Px[\g_w]\,\delta_k)(u)\ p_\X[\g_w\inv](u_0)\ \omega[\g_w](u_0)\inv \: \diff u \: x(w) \: \diff w\\
&= \dint A(u,v) \dint (\Px[\g_w]\,\delta_k)(u)\ p_\X[\g_w\inv](u_0)\ x(w)\,\omega[\g_w](u_0)\inv \: \diff w \: \diff u,
\end{align*}
where we use Fubini's theorem for distributions~\cite[Theorem 40.4]{Treves1967}.
By considering
\begin{equation*}
(\delta_k \conv{\X}{\X} x)(u) = \dint (\Px[\g_w]\,\delta_k)(u)\ p_\X[\g_w\inv](u_0)\ x(w)\,\omega[\g_w](u_0)\inv \: \diff w,
\end{equation*}
we obtain the representation
\begin{equation*}
\A_k x = \A(\delta_k \conv{\X}{\X} x).
\end{equation*}
Standard computations show the convergence $\delta_k \conv{\X}{\X} x \to x$ in $\D(\Omega_\X)$ and by continuity of $\A$ follows that
\begin{equation*}
\A_k x \xrightarrow{k \to \infty} \A x.
\end{equation*}
Using the equivariance property~\eqref{eq:equivariance} of $\A$ and the equivariance of $\conv{\X}{\X}$ that we have already shown in the first part of this proof, we also have
\begin{equation*}
\A_k \Px[\g] x = \A\, (\delta_k\conv{\X}{\X}\Px[\g] x) = \A\Px[\g]\,(\delta_k\conv{\X}{\X} x) = \Py[\g]\A\,(\delta_k\conv{\X}{\X} x) = \Py[\g] \A_k x
\end{equation*}
and a change of variables implies that
\begin{equation*}
\dint A_k(\px[\g](u),v)\ p_\X[\g](\px[\g](u))\ x(u)\,\omega[\g](u) \: \diff u = \dint (\Py[\g]A_k)(u,v)\,x(u) \: \diff u.
\end{equation*}
Since this holds true for all $x \in \D(\Omega_\X)$, we obtain for all $\g \in \G$ the condition
\begin{equation*}
A_k(u,v) = (\Py[\g]\inv A_k)(\px[\g](u),v)\ p_\X[\g](\px[\g](u))\ \omega[\g](u).
\end{equation*}
By choosing $\g = \g_u\inv$ and defining $a_k := A_k(u_0, \cdot) \in \D(\Omega_\Y)$, we arrive at
\begin{equation*}
A_k(u,v) = (\Py[\g_u]\, a_k)(v)\ p_\X[\g_u\inv](u_0)\ \omega[\g_u](u_0)\inv,
\end{equation*}
applying the inverse function rule in $\omega$.
Since this holds true for all choices $\g_u \in \g_u\N_{\px}(u_0)$, the $a_k$ have to fulfil
\begin{equation}\label{eq:constraint_approx}
\begin{split}
(\Py[\g_un]\, a_k)(v)\ &= (\Py[\g_u]\, a_k)(v)\ p_\X[\g_u\inv](u_0) \ p_\X[(\g_un)\inv](u_0)\inv\ |\det(\Diff\px[n](u_0))|\\
&= (\Py[\g_u]\, a_k)(v)\ p_\X[n](u_0)\ |\det(\Diff\px[n](u_0))|.
\end{split}
\end{equation}
for all $n \in \N_{\px}(u_0)$.
Finally, taking the limit $k \to \infty$ gives
\begin{align*}
(\A x)(v) &= \lim_{k\to\infty} \dint (\Py[\g_u]\, a_k)(v)\ p_\X[\g_u](u)\inv\ x(u)\ \omega[\g_u](u_0)\inv \: \diff u\\
&= \dint (\Py[\g_u]\, a)(v)\ p_\X[\g_u](u)\inv\ x(u)\ \omega[\g_u](u_0)\inv \: \diff u,
\end{align*}
where $a \in \D'(\Omega_\Y)$ is the limit of $(a_k)_{k \in \mathbb{N}}$ in $\D'(\Omega_\Y)$ and $u \mapsto (\Py[\g_u]\, a)(v)$ has to be understood as a distribution in $\D'(\Omega_\X)$ for each $v \in \Omega_\Y$.
Similarly, we obtain the condition~\eqref{eq:constraint} for $a$ by taking the limit $k \to \infty$ in~\eqref{eq:constraint_approx} in distributional sense.
\end{proof}

To close this section we now discuss the Radon transform as an illustrating example.
\begin{example}\label{ex:radon}
For $\Omega_\X = \R^2$ and $\Omega_\Y = \R \times [0,2\pi)$ the Radon operator ${\Radon: \D(\Omega_\X) \to \D(\Omega_\Y)}$ models the measurement process in computerized tomography, cf.~\cite{Natterer2001}, and is defined as
\begin{equation*}
(\Radon x)(r,\varphi) = \int_{\R^2} \delta(u^T \vec{\varphi} - r) \, x(u) \: \diff u,
\end{equation*}
integrating over the line $\{u \in \R^2 \mid u^T\vec{\varphi} = r\}$ with normal direction $\vec{\varphi} = (\cos(\varphi),\sin(\varphi))^T \in \Sphere^1$.
We consider the roto-translation group $G = \SE = \T \rtimes \SO$, which we parametrize by $(s,\gamma) \in \R^2 \times [0,2\pi)$ with operation
\begin{equation*}
(s_1,{\gamma_1})\cdot(s_2,{\gamma_2})=(s_1+R(\gamma_1)s_2, {\gamma_1+\gamma_2}),
\end{equation*}
where addition of angles is performed modulo $2\pi$ and $R(\gamma) \in \SO$ is the rotation matrix
\begin{equation*}
R(\gamma) = \begin{pmatrix}\cos\gamma & -\sin\gamma\\ \sin\gamma & \cos\gamma\end{pmatrix}.
\end{equation*}
A group action $\px$ on $\Omega_\X = \R^2$ can be defined via
\begin{equation*}
\px[(s,{\gamma})](u)=R(\gamma)u+s
\end{equation*}
so that $\Omega_\X$ is a homogeneous space of $\px$ and fixing the origin $u_0 = 0 \in \Omega_\X$ we obtain
\begin{equation*}
\N_\px(u_0) = \{(0,{\gamma}) \mid \gamma \in [0,2\pi)\}.
\end{equation*}
Straightforward computations show that
\begin{equation*}
(\Radon(x\circ\px[(s,\gamma)^{-1}]))(r,\varphi) = (\Radon x)(r - s^T \vec{\varphi},\varphi-\gamma)
\end{equation*}
and, hence, we define the group action $\py$ on $\Omega_\Y = \R \times [0,2\pi)$ via
\begin{equation*}
\py[(s,\gamma)](r,\varphi) = (r+s^T(\overrightarrow{\varphi+\gamma}), \varphi+\gamma).
\end{equation*}

\bigbreak

With this, $\Omega_\Y$ is a homogeneous space of $\py$.
For $u \in \Omega_\X$ we choose $\g_u = (u,0) \in G$ and obtain $\py[\g_u\inv](r,\varphi) = (r-u^T\vec{\varphi},\varphi)$ as well as $\det(\Diff\px[\g_u](u_0)) = 1$.
As a result, Theorem~\ref{thm:equivariance} implies that any bounded linear operator $\A: \D(\Omega_\X) \to \D(\Omega_\Y)$ that transforms translations and rotations exactly like the Radon operator $\Radon$ can be written as
\begin{equation*}
(\A x)(r,\varphi) = \int_{\R^2} a(r-u^T\vec{\varphi},\varphi) \, x(u) \: \diff u
\end{equation*}
with $a \in \mathcal{D}'(\Omega_\Y)$.
Furthermore, using $(\g_u n)\inv = (-R(-\gamma)u,-\gamma)$ for $n = (0,\gamma) \in \N_\px(u_0)$, the condition on $a$ reads
\begin{equation*}
a(r-(R(-\gamma)u)^T(\overrightarrow{\varphi-\gamma}),\varphi-\gamma) = a(r-u^T\vec{\varphi},\varphi)
\iff
a(r-u^T\vec{\varphi},\varphi-\gamma) = a(r-u^T\vec{\varphi},\varphi)
\end{equation*}
for arbitrary $\gamma \in [0,2\pi)$ so that $a \equiv a(r)$ is independent of the angle $\varphi \in [0,2\pi)$.
Therefore, all possible operators are given by
\begin{equation*}
(\A x)(r,\varphi) = \int_{\R^2} a(r-u^T\vec{\varphi}) \, x(u) \: \diff u
\end{equation*}
for some distribution $a \in \D'(\R)$ and we obtain the Radon operator itself for $a = \delta$.

To also include anisotropic scaling, we consider the group $G = \Aff = \T \rtimes \GL$ of all invertible affine transforms with positive determinant, for which we define the group operation via
\begin{equation*}
(s_1, A_1) \cdot (s_2,A_2) = (s_1 + A_1 s_2, A_1 A_2)
\end{equation*}
and the group representation $\Px$ on $\X$ via
\begin{equation*}
(\Px[(s,A)] x)(u) = x(\px[(s,A)]^{-1}(u)).
\end{equation*}
with
\begin{equation*}
\px[(s,A)](u) = Au + s.
\end{equation*}
Then, $\Omega_\X$ is a homogeneous space of $\px$ and fixing the origin $u_0 = 0 \in \Omega_\X$ we obtain
\begin{equation*}
\N_\px(u_0) = \{(0,A) \mid A \in \GL\}.
\end{equation*}
Standard computations show that
\begin{equation*}
(\Radon\Px[(s,A)]x)(r,\varphi) = \int_{\R^2}\delta(u^T\Vec{\varphi}-r)\ x(A\inv(u-s)) \: \diff u = \frac{\det(A)}{\alpha(A,\varphi)} \, (\Radon x)\Bigl(\frac{r-s^T\Vec{\varphi}}{\alpha(A,\varphi)}, \theta(A,\varphi)\Bigr)
\end{equation*}
with
\begin{equation*}
\alpha(A,\varphi) = \|A^T\Vec{\varphi}\|_2
\quad \mbox{ and } \quad
\theta(A,\varphi) = \arccos\bigg(\frac{(A^T\Vec{\varphi})_1}{\alpha(A,\varphi)}\bigg)
\end{equation*}
so that
\begin{equation*}
\Vec{\theta}(A,\varphi) = \frac{A^T\Vec{\varphi}}{\|A^T\Vec{\varphi}\|_2}.
\end{equation*}
Hence, we define the group representation $\Py$ on $\Y$ via
\begin{equation*}
(\Py[(s,A)]y)(r,\varphi) = p_\Y[(s,A)](r,\varphi)\ y(\py[(s,A)]\inv(r,\varphi))
\end{equation*}
where
\begin{equation*}
p_\Y[(s,A)](r,\varphi) = \frac{\det(A)}{\alpha(A,\varphi)}
\quad \mbox{ and } \quad
\py[(s,A)](r,\varphi) = \Bigl(\frac{r+s^T A\invT\Vec{\varphi}}{\alpha(A\inv,\varphi)}, \theta(A\inv,\varphi)\Bigr)
\end{equation*}
so that $\Py$ is a generalized domain transform and
\begin{equation*}
\py[(s,A)]\inv(r,\varphi) = \Bigl(\frac{r-s^T\Vec{\varphi}}{\alpha(A,\varphi)}, \theta(A,\varphi)\Bigr).
\end{equation*}
For illustration, Figure~\ref{fig:radon_digit} shows the action of $\Py$ on sinograms of handwritten digits.
For $u \in \Omega_\X$ we choose $\g_u = (u,I_2) \in G$ and obtain $p_\Y[\g_u](r,\varphi) = 1$, $\py[\g_u]\inv(r,\varphi) = (r-u^T\vec{\varphi},\varphi)$ as well as $\det(\Diff\px[\g_u](u_0)) = 1$.
Consequently, Theorem~\ref{thm:equivariance} implies again that any bounded linear operator $\A: \D(\Omega_\X) \to \D(\Omega_\Y)$ that transforms invertible affine transforms with positive determinant exactly like the Radon operator $\Radon$ can be written as
\begin{equation*}
(\A x)(r,\varphi) = \int_{\R^2} a(r-u^T\vec{\varphi},\varphi) \, x(u) \: \diff u
\end{equation*}
with $a \in \mathcal{D}'(\Omega_\Y)$.
Furthermore, using $(\g_u n)\inv = (-A\inv u,A\inv)$ for $n = (0,A) \in \N_\px(u_0)$, the condition on $a$ reads
\begin{align*}
\frac{\det(A)}{\|A^T\Vec{\varphi}\|_2} \, a\Bigl(\frac{r-u^T\Vec{\varphi}}{\|A^T\Vec{\varphi}\|_2}, \theta(A,\varphi)\Bigr) & = a(r-u^T\vec{\varphi},\varphi) \det(A) \\
& \iff a\Bigl(\frac{r-u^T\Vec{\varphi}}{\|A^T\Vec{\varphi}\|_2}, \theta(A,\varphi)\Bigr) = \|A^T\Vec{\varphi}\|_2 \, a(r-u^T\vec{\varphi},\varphi)
\end{align*}
for arbitrary $A \in \GL$ so that $a \equiv a(r)$ is independent of the angle $\varphi \in [0,2\pi)$.
Moreover, $a \in \D'(\R)$ is positively homogeneous of degree $-1$ and, according to~\cite[Chap.~1, §~3.11]{Gelfand1964}, there are constants $c_1,c_2 \in \R$ such that
\begin{equation*}
a = c_1 \, \chi^{-1} + c_2 \, \delta,
\end{equation*}
where
\begin{equation*}
\chi^{-1}(\psi) = \int_\R \frac{\psi(r) - \psi(0)}{r} \: \diff r
\quad \mbox{ for } \psi \in \D(\R).
\end{equation*}

In summary, the symmetries of the Radon operator characterize most of its behaviour and leave only very few degrees of freedom for operators that share the same symmetry properties.
This might become useful, e.g., in the construction of end-to-end reconstruction schemes, which could benefit from an encoded reverse conversion of symmetries.
\end{example}

\section{Equivariant Layers}\label{sec:layers}

Now that we have characterized the group representation $\Py$ on the measurements in $\Y$, we aim for the construction of equivariant neural networks $\NN_\theta: \Y \to \Z$ such that
\begin{equation*}
\Pz[\g]\,\NN_\theta(y) = \NN_\theta(\Py[\g]\, y)
\quad \forall \, y \in \Y, ~ \g \in \G.
\end{equation*}
Here, the space $\Z$ and representation $\Pz$ have to be chosen depending on the application. In reconstruction or semantic segmentation tasks, the natural choice is $\Z = \X$ and $\Pz = \Px$. Learned preprocessing could be obtained by $\Z = \Y$ and $\Pz = \Py$. Invariant classification tasks require $\Pz[\g] = \mathrm{Id}$ independently of $\g \in \G$.
Based on the findings in \cite{Cohen2016}, it is however useful to construct invariance only in the last layer of the network and use more complex group representations in the inner hidden part.

In the continuous setting studied previously, the construction of equivariant linear neural network layers for generalized domain transforms is completely characterized by Theorem~\ref{thm:equivariance}.
In practical applications, however, we do not only need a discretization for numerical purposes but also have to deal with a finite and possibly very limited number of measurement points.
We therefore model the whole indirect measurement process as $y_V^\delta = \S \A x + \eta^\delta$, where $\eta^\delta$ models noise in the measurements of noise level $\delta$ and $\S$ defines a sampling scheme
\begin{equation*}
\S:\Y \to \Yv
\end{equation*}
evaluating a continuous signal $y \in \Y$ at discrete measurement points $V = \{v_i\}_{i=1,...,n} \subset \Omega_\Y$.

In this section, we will construct neural networks that are (approximately) equivariant to generalized domain transforms as representations in the input and output of all layers and that can be applied directly to arbitrary data generated by indirect measurements. Figure \ref{fig:space_scheme} gives an overview of the setting and the involved spaces.

\begin{figure}[t]
\centering
\includegraphics[width=\textwidth]{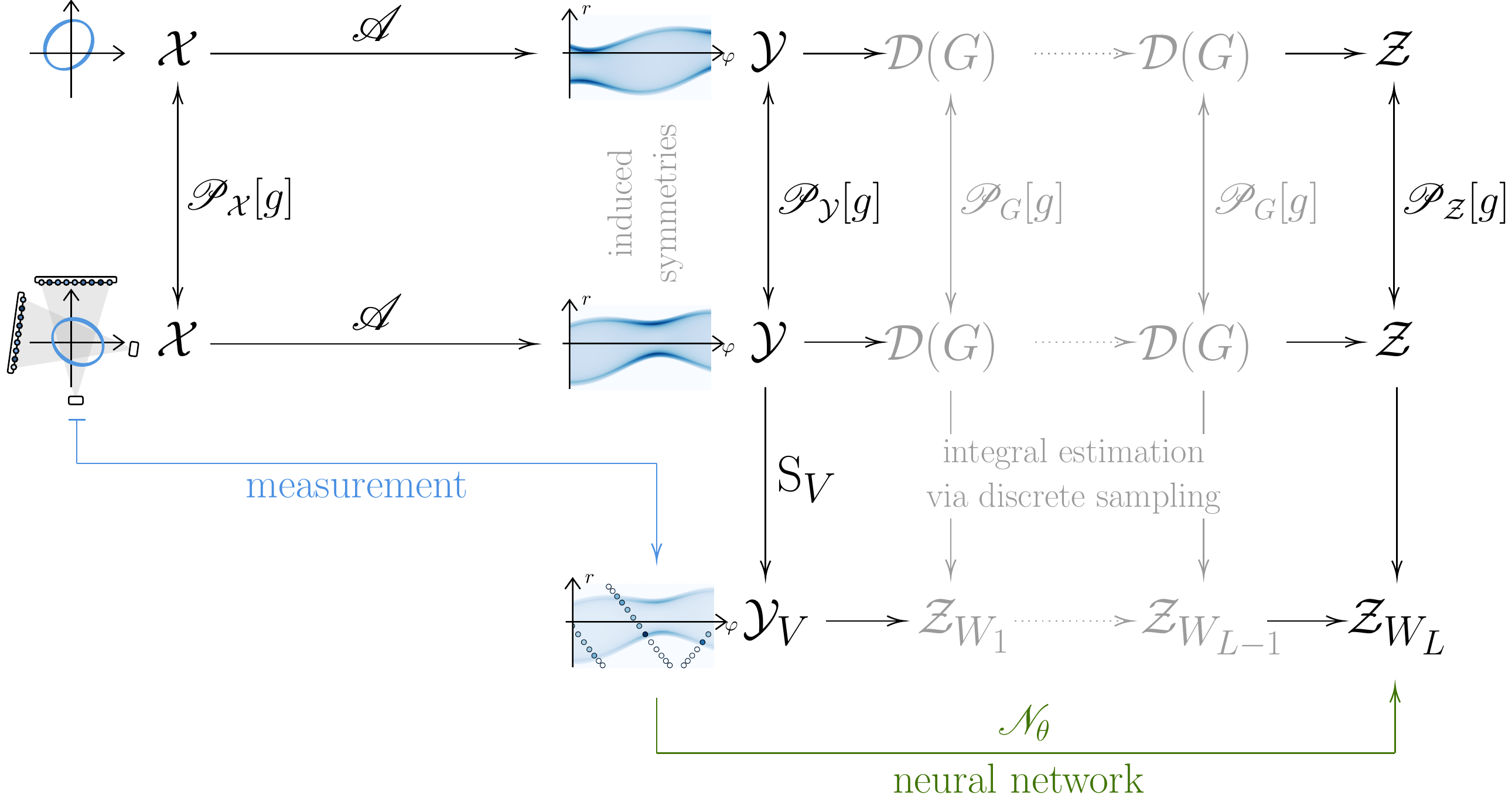}
\caption{Overview of our approach, including the measurement scheme as well as the learned processing by an equivariant neural network. The plots illustrate the modelling and symmetries of a two-angle fan-beam Radon operator on a rotated and translated elliptical ring.}
\label{fig:space_scheme}
\end{figure}

\subsection{The Challenge of Discrete Equivariance}

When trying to treat equivariance in discrete measurements, the first problem that occurs is the definition of appropriate group representations on the discrete data.
Since the data is only partially captured, the discretized operators violate the visibility condition in Theorem~\ref{thm:visibility}.
As a result, it is not clear how general group transforms in the input can be handled in the measurements. In addition, also classical reconstruction methods applied to sparse measurements cannot be exactly group equivariant, which limits the performance of all methods that are based on reconstruction schemes.

A possible way out that can also be implemented numerically is to be satisfied with equivariance only with respect to a discrete subgroup $\G_V \subset \G$, which would result in an approach similar to~\cite{Kondor2018} and yields a direct generalization of classical CNNs.
However, this requires the measurements to lie on a grid that fits the structure of the subgroup.
More precisely, to handle the measurement points $V$ it is necessary that we have the set equality
\begin{equation*}
\py[\G_V](v_0) = V
\end{equation*}
for a suitably chosen $v_0 \in \Omega_\Y$.
In general, such a subgroup $\G_V$ does not exist and, hence, this idea can only be applied for very specific sampling schemes $\S$.
A numerically more complex but also more general solution is to construct networks that are {\em approximately} equivariant to the representation $\Py$ on the whole Lie group $\G$ and can operate on arbitrary point clouds, extending the work in~\cite{Finzi2020}.
Following this approach, only the continuous measurement operator $\A$ has to be equivariant, i.e., fulfil the visibility condition in Theorem~\ref{thm:visibility}.
For many applications (e.g. sparse CT or image inpainting), this can be achieved by a proper modelling of $\A$ and $\S$ such that the definition of $\A$ reveals the underlying symmetries that would appear in the measurement data analytically if one was able to measure the complete signal.
However, discrete measurements also induce limitations which cannot be overcome by any advanced approach (without additional data knowledge): the equivariance of the resulting methods in the limit and its improvement for an increasing number of measurement points is constrained by the uniformity of the sensor locations.
Nevertheless, we expect the resulting networks to provide an efficient architecture also in case of non-uniform sparse measurements.

\subsection{Construction of Equivariant Networks}

Neural networks are typically built based on linear layers and the localized application of non-linear activation functions.
To construct equivariant networks, our approach is to concatenate equivariant layers $\NN^\ell: \Z_{\ell-1} \to \Z_\ell$ and set
\begin{equation*}
\NN_\theta = \NN^{N_L} \circ \dots \circ \NN^1
\end{equation*}
where $\Z_0 = \Y$ and $\Z_L = \Z$.
As a consequence of Theorem \ref{thm:equivariance}, for given input and output generalized domain transforms, the structure of an equivariant linear layer is known to be convolutional and the layer can be fully parametrized by the (distributional) kernel $a \equiv a_\theta$.

For the implementation of such linear layers we have to approximate the integral in~\eqref{eq:convolution}, where we solely consider generalized domain transforms for the input and output of the layers.
Such transforms, however, cover the representations in $\Y$ or $\Z$ that may appear in our desired indirect measurement applications.
As usual for convolutional networks, we use localized convolution kernels with compact support around the origin.
We approximate the integral for linear layer $\NN^{\ell+1}$ by
\begin{align*}\label{eq:conv_layer}
(\NN^{\ell+1} z)(w'_j) &=  \int_{\Omega_{\Z_\ell}} (\P_{\Z_{\ell+1}}[\g_{w}]\,a_\theta)(w'_j)\ p_{\Z_{\ell}}[\g_{w}\inv](w_0)\,z(w)\  |\det(\Diff\p_{\Z_{\ell}}[\g_{w}](w_0))|\inv \: \diff w\\
&\approx \sum_{i = 1}^{M_{\ell}} c_i \, (\P_{\Z_{\ell+1}}[\g_{w_i}]\,a_\theta)(w_j')\ p_{\Z_{\ell}}[\g_{w_i}\inv](w_0)\,z({w_i})\ |\det(\Diff\p_{\Z_{\ell}}[\g_{w_i}](w_0))|\inv,
\end{align*}
for the parameterized continuous kernel $a_\theta$ and collocation points $W_\ell = \{w_i\}_{i=1,...,M_\ell} \subset \Omega_{\Z_\ell}$, where $z \in \Z_\ell$, $w'_j \in \Omega_{\Z_{\ell+1}}$ and $C_\ell = \{c_i\}_{i=1,...,M_\ell} \subset \K$ denote quadrature weights.

In the inner layers of the network, the $w_i$ are sampled uniformly, such that we perform Monte-Carlo summation with constant $c_i=|\Omega_{\Z_\ell}|/M_\ell$.
In the first layer, the input is known only at the measurement locations and, therefore, we have to choose $W_0 = V$ and perform numerical integration with fixed quadrature points.
For this purpose, we build on the recent work in \cite{Doherty2022}, where the authors report successful usage of learned quadrature weights, i.e., the $c_i$ become parameters during the training of the network. Even though the approximation of the integrals may induce numerical errors, we expect that the resulting convolutional layer structure forms a good inductive bias in all layers while this scheme enables us to handle arbitrary measurement geometries. The inputs then consist of pairs $(y_i, v_i)_{v_i \in V}$, where $y_i$ corresponds to the measured value at node $v_i$ (e.g., $v_i = (r_i, \varphi_i)$ in the case of a sinogram). 

The constraint~\eqref{eq:constraint} for the kernel along the orbits of the stabilizer can however be quite restrictive and limit the expressive power of the layers.
For the symmetries that appear in indirect measurements, the limitations are especially severe, since these orbits can become unbounded, preventing localized kernels.
To overcome this, one has to properly choose the group representations used throughout the neural network.
In most existing discrete~\cite{Kondor2018} and continuous~\cite{Finzi2020} approaches, this is tackled by choosing $\Z_\ell = \D(\G, \R^{c_\ell})$, $\ell = 1,\dots,L-1$, and the canonical domain transform $\P_\G = \P(\pg)$, $\pg: \G \to \G$, $\g \mapsto \g\cdot$, on the group itself as the input and output representation of inner layers.
As a homogeneous space of itself, $\G$ has a trivial stabilizer and the constraint~\eqref{eq:constraint} vanishes.

In the first layer, $\NN^1: \Y \to \D(\G, \R^{c_1})$, Theorem~\ref{thm:equivariance} gives
\begin{equation*}
(\NN^1 y)(\g) = \int_{\Omega_\Y} a_\theta(\g_v\inv\g)\ p_\Y[\g_v\inv](v_0)\,y(v)\ |\det(\Diff\py[\g_v](v_0))|\inv \: \diff v,
\end{equation*}
where the kernel $a_\theta \in \D'(\G)$ has to fulfil
\begin{equation*}
a_\theta(n\inv\g) = a_\theta(\g)\ p_\Y[n](v_0)\ |\det(\Diff\py[n](v_0))|.
\end{equation*}
This can be realized by setting
\begin{equation*}
a_\theta(\g)=\Tilde{a}_\theta(\py[\g\inv](v_0))\ p_\Y[\g](v_0)\inv\ |\det(\Diff\py[\g\inv](v_0))|
\end{equation*}
for a kernel $\Tilde{a}_\theta \in \D'(\Omega_\Y)$ so that
\begin{equation*}
(\NN^1 y)(\g) = \int_{\Omega_\Y} \Tilde{a}_\theta(\py[\g\inv](v))\ p_\Y[(\g\inv)](v_0))\,y(v)\ |\det(\Diff\py[\g](v_0))|\inv \: \diff v.
\end{equation*}

As usual for neural networks, we construct non-linear layers using local non-linearities that act only on the values of activations and are independent of the spatial position.
Since, in most cases, activation functions are applied only in the inner parts of the network (where all representations $\P_{\Z_\ell}$ are given by the canonical domain transform on $\G$), we do not have to consider additional constraints on these functions.

\section{Numerical Experiments}\label{sec:numerics}

\begin{figure}[t]
\centering
\includegraphics[width = 0.8\textwidth]{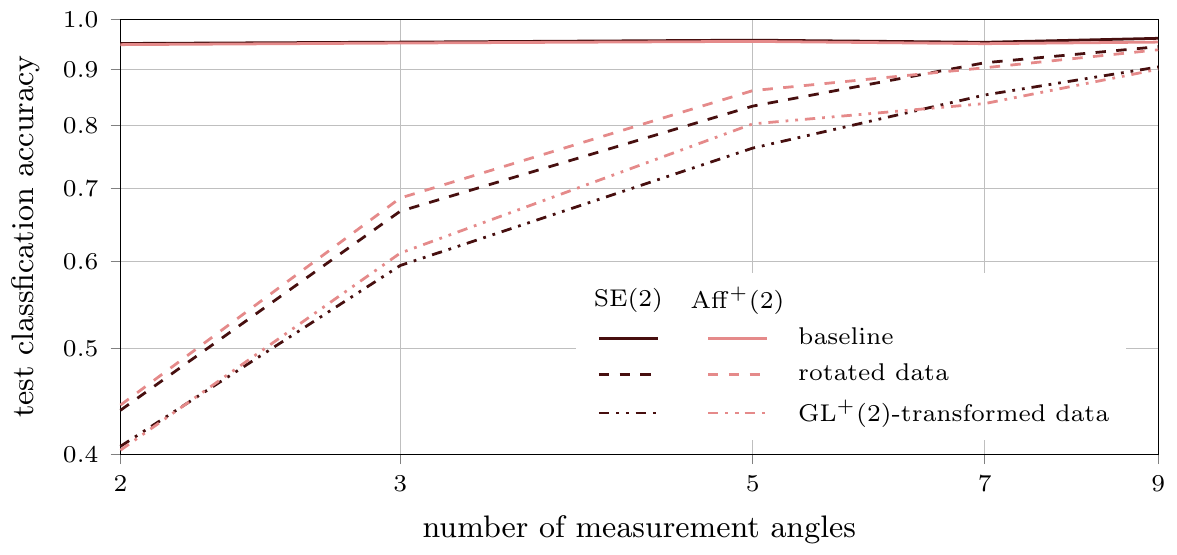}
\caption{Classification accuracy for fan-beam CT measurements of randomly rotated or $\GL$-transformed MNIST test dataset for different neural networks that are trained exclusively on upright digits.}
\label{fig:invariance}
\end{figure}

To obtain first insights on the effectiveness of our idea in practice, we implemented the previously defined layers and performed a series of experiments in the setting of classification and regression tasks based on (sparse) CT measurements,
where we require equivariance/invariance from $\X$ to $\Z$, while measurements are taken in $\Y$.

\subsection{Implementation}

We implemented a flexible PyTorch~\cite{PyTorch} framework that allows us to compute the generalized convolution in~\eqref{eq:conv_layer} for different input and output representations.
Our code is publicly available on GitHub\footnote{\url{https://github.com/nheilenkoetter/equivariant-nn-for-indirect-measurements}}.

In each layer, we use a small fully-connected neural network with two layers, batch normalization~\cite{Ioffe2015} and Swish activations~\cite{Ramachandran2017} to parameterize a basis of the convolution kernels $a_\theta$.
The kernels connecting all input and output channels are then additionally parameterized by individual linear factors.
In this setting, we use the automatic optimization of Einstein summation in PyTorch to perform the efficient PointConv trick~\cite{Finzi2020}.
Where the constraint~\eqref{eq:constraint} allows, we enforce locality of the kernels $a_\theta$ respectively $\Tilde{a}_\theta$ by an additional rapidly decaying factor, i.e.,
$a_\theta(w_j') = \exp(-d(w_j', w_0')^2/r^2)\ \Hat{a}_\theta(w_j')$, where $d: \Omega_{\Z_\ell} \times \Omega_{\Z_\ell} \to \R_{\geq 0}$ is a metric that is invariant with respect to $\pi_{\Z_\ell}$.
In addition, we compute the resulting sum only over a set of $k = 27$ nearest neighbours to minimize the computational effort.

We use a residual network architecture~\cite{He2015} that is built of three residual blocks connected by downsampling layers (increasing the field of view and decreasing the amount of sampled points).
In each block, we use three convolutional layers combined with batch normalization and ReLU activations~\cite{Fukushima1975}.
We choose $22$ channels in the initial layer and double the amount of channels in each downsampling step, while we halve the amount of sampled points.
We obtain invariance by adding a global average pooling in the last layer, leaving us only with the constructed channel dimension.

In every iteration of the optimization, we sample a new set of inner collocation points. However, to reduce the computational effort, we sample only once for each batch and each residual block and use the same points along all layers with identical scale.

In all of our experiments on image data, we simulate fan-beam Radon measurements using the ASTRA~\cite{vanAarle2016, vanAarle2015} toolbox.
For optimization, we use Adam~\cite{Kingma2014} with weight decay and a learning rate scheduling scheme.

\begin{figure}[t]
\centering
\subfigure[Test on RotMNIST]{\includegraphics[width=0.425\textwidth]{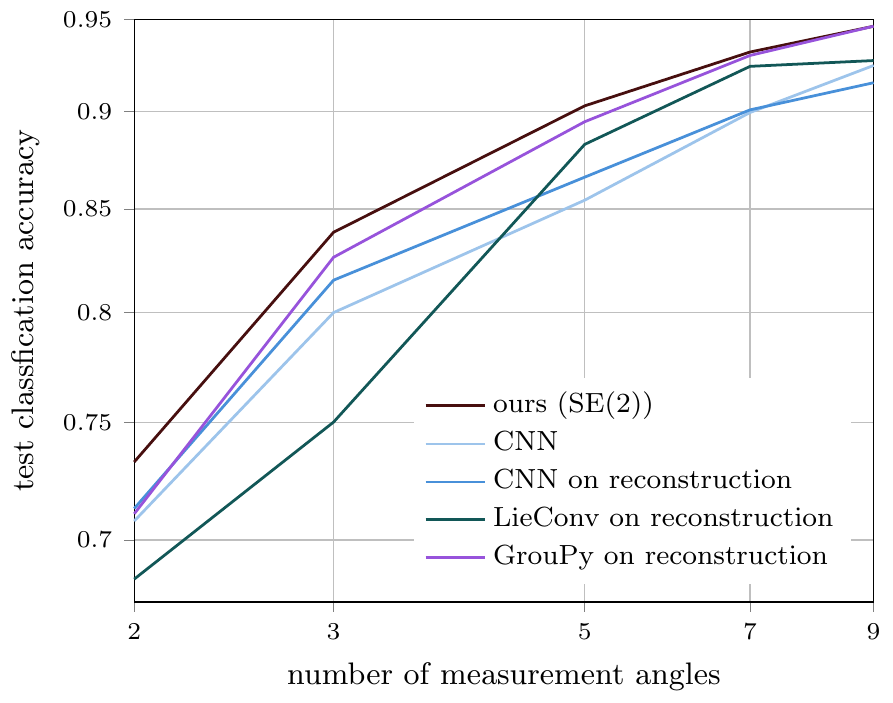}}
\hfil
\subfigure[Test on LinMNIST]{\includegraphics[width=0.425\textwidth]{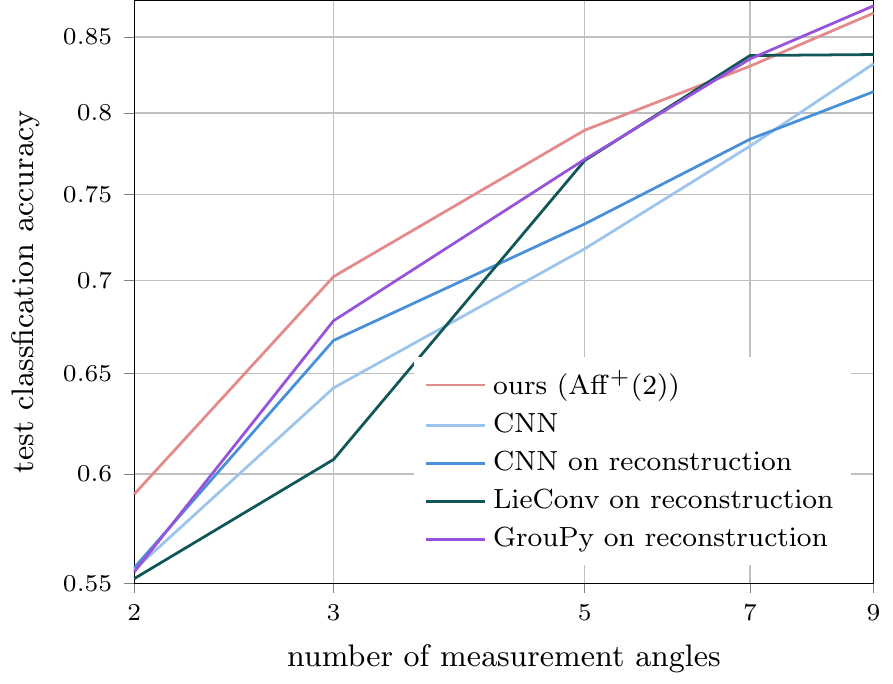}}
\caption{Classification accuracy for fan-beam CT projections of the RotMNIST dataset for different numbers of measurement angles when trained on RotMNIST and tested on RotMNIST or LinMNIST.}
\label{fig:classification}
\end{figure}

\subsection{Digit Classification}

As an exemplary task on indirect measurements, we perform experiments on the classification of sparse Radon measurements of digits from the popular MNIST~\cite{Deng2012} dataset and its variants, cf.~Figure~\ref{fig:radon_digit}.
We study invariance with respect to the roto-translation group $\SE$ and the group $\Aff$ of affine transforms with positive determinant we considered previously for the Radon operator in Example~\ref{ex:radon}.
To this end, we uniformly sample a rotation angle in $[0,2\pi)$, scaling factors for each direction in $[0.75, 1.25]$ and a horizontal shear factor in $[-0.5,0.5]$.
For the discretization of the integrals in the convolutions we always sample $2000$ points in the first residual block.

\subsubsection{Invariance}

To verify our implementation as well as discretization approach, we perform tests on the actual numerical invariance of the constructed networks.
Let us again stress that we can only expect {\em approximate} invariance due to the built-in approximations and sampling scheme.
For this purpose, we train roto-translation equivariant neural networks on varying geometries of sparse-angle fan-beam Radon measurements of the standard MNIST dataset (containing only upright digits) and evaluate the models on transformed versions of the test dataset, where we consider arbitrary rotations and transforms from $\GL$.
The transformed signals are computed using nearest-neighbour interpolation on the source images, before simulating the sparse measurements.
We train for $90$ epochs with a batch size of $14$.

\smallskip

The results for a varying amount of measurement angles are depicted in Figure~\ref{fig:invariance}, where the angles are chosen equidistantly in $[0, 2\pi)$, except for two angles, where we use $0$ and $\frac{\pi}{2}$.
While the classification accuracy on the non-rotated test data is similar for all geometries, the constructed invariance is better for increasing number of measurement angles.
Nevertheless, in all cases, we notice some extrapolation ability to unseen and transformed data, which underpins the intuition that the constructed network architecture serves as a good inductive bias even if exact equivariance cannot be guaranteed.
Surprisingly, the roto-translation equivariant networks perform similarly to the networks that were built to be equivariant with respect to the larger group $\Aff$ even in terms of generalization to unseen affinely transformed data.
This could indicate that the latter method requires additional fine-tuning.

\begin{figure}[t]
\centering
\subfigure[Samples from the LinMNIST dataset, which includes rotations, anisotropic scaling and shearing of MNIST digits.]{\includegraphics[width=0.4\textwidth]{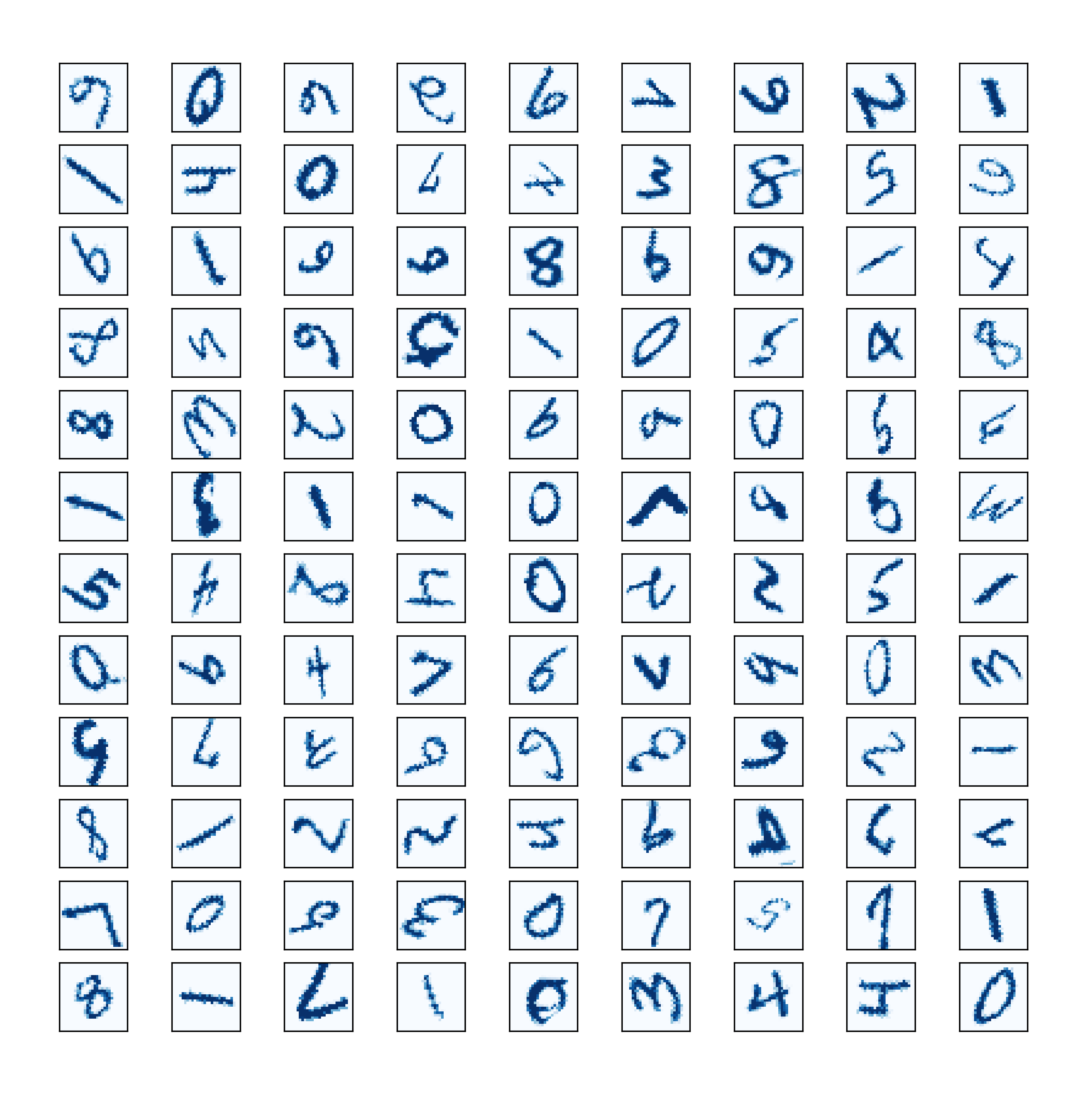}\label{fig:sample_digits}}
\hfil
\subfigure[Geometry of the simulated tube measurements, where $d_\text{max}$ and $d_\text{min}$ denote the maximal and minimal thickness of the tube.]{\includegraphics[width=0.4\textwidth]{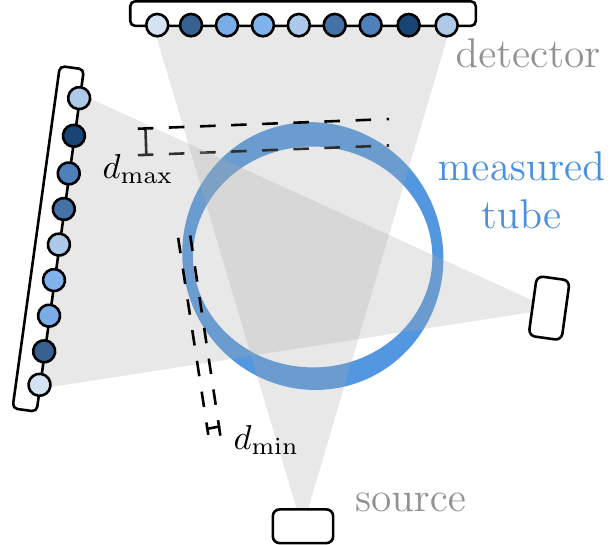}}
\caption{Visualizations of our generated datasets for the digit and ellipse experiments.}
\label{fig:datasets}
\end{figure}

\subsubsection{RotMNIST}

For further investigation we evaluate the classification accuracy of our approach on noisy indirect measurements of the rotated MNIST benchmark dataset, cf.~\cite{Larochelle2007}.
We consider the same measurement geometries and network architectures as before and add $6\%$ Gaussian noise to the simulated Radon data.
For comparison, we also train a standard discrete $\T$-equivariant CNN on the sparse sinograms and compute classical reconstructions for post-processing using the ASTRA toolbox implementation of the SIRT reconstruction scheme. An example for the obtained reconstructions in the 2-angle case is depicted in Figure~\ref{fig:radon_digit}.
On these reconstructions, we train a discrete group-equivariant (with respect to discrete translation and rotations in $\pi/2$-steps) ResNet18~\cite{He2015} based on the layer structure that was presented in~\cite{Cohen2016} as well as the continuous Lie-group-equivariant network implemented in LieConv~\cite{Finzi2020} for their experiments on RotMNIST. All models are trained for $400$ epochs with a batch size of $14$.
We evaluate the models on the RotMNIST test data as well as on a $\GL$-transformed version of it, which we refer to as LinMNIST. The results are visualized in Figure~\ref{fig:classification}, while samples from the LinMNIST dataset are depicted in Figure~\ref{fig:sample_digits}.

\smallskip

In all experiments, our approach outperforms the reconstruction-based approaches as well as the standard CNN, which does not encode proper symmetries in measurement space, in terms of generalization ability.
While the discrete reconstruction-based network shows better convergence on the training data, our method consistently obtains better results on the test dataset, especially for sparse geometries where the reconstructions are very inexact and amplify the noise.
These results show that we were able to construct a valuable prior for indirect measurements and that it can be advantageous to avoid reconstructions in certain cases.

\subsubsection{Limited-angle CT}

\begin{figure}[t]
\centering
\subfigure[Classification accuracy of $\SE$-equivariant networks for randomly rotated and $\GL$-transformed MNIST test measurements when exclusively trained on upright digits.]{\includegraphics[width=0.425\textwidth]{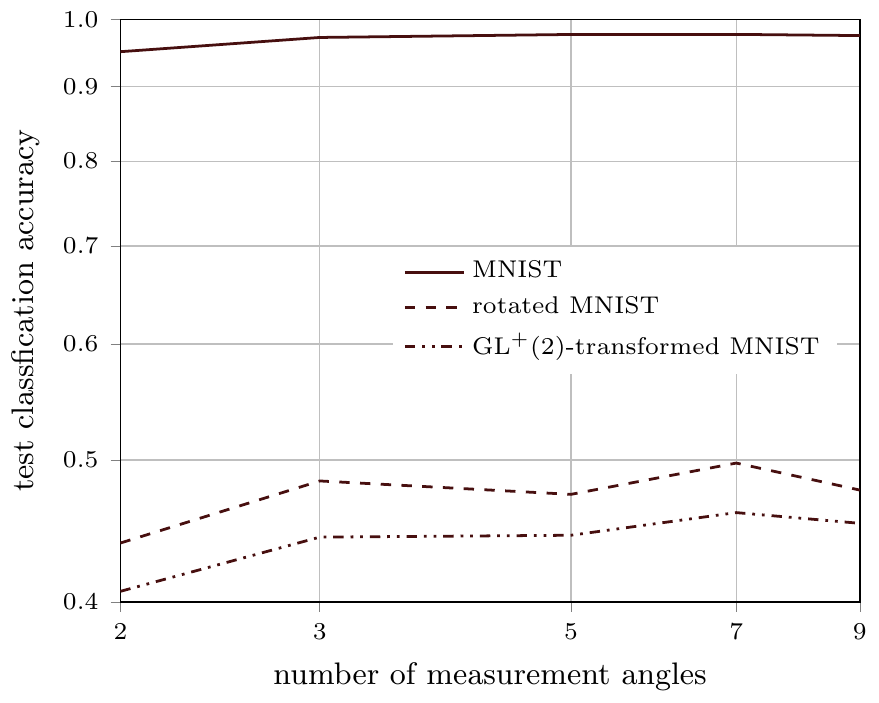}}
\hfil
\subfigure[Classification accuracy for RotMNIST test measurements for varying numbers of angles and different classification approaches.]{\includegraphics[width=0.425\textwidth]{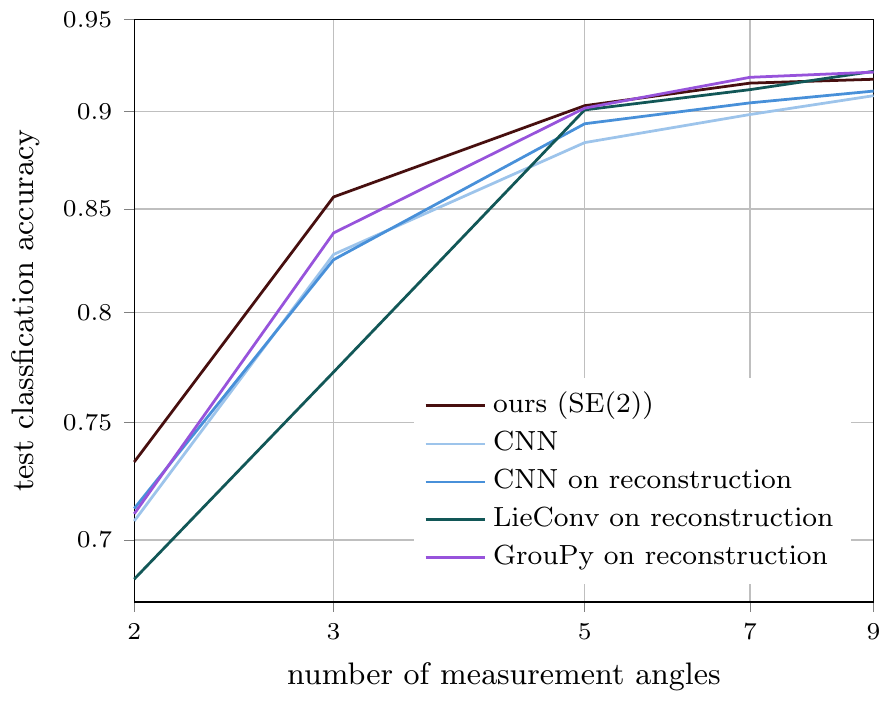}}
\caption{Invariance and classification accuracy for the case of limited-angle fan-beam CT geometries. While our network decreases in equivariance if the sensor locations do not cover the whole measurement space, it still provides an efficient bias when compared to other methods.}
\label{fig:lim_angle}
\end{figure}

To test the performance of our methods in applications where the sensor locations do not fully cover the measurement space, we additionally run experiments in the context of limited-angle CT. To this end, we also use a fan-beam geometry with a varying number of equidistant measurement angles, but choose them only in the range $[0,\frac{\pi}{2}]$. In this case, we do not expect good equivariance properties for any method due to the violation of the visibility condition even in the limit case of infinitely many data. If a digit has been measured only from a certain view angle during training, it becomes impossible to classify rotated version of it without assuming additional data knowledge.

\smallskip

The results using our proposed $\SE$-equivariant neural network architecture are depicted in Figure~\ref{fig:lim_angle}, where, in a first experiment, we train our roto-translation equivariant neural networks on limited-angle fan-beam Radon measurements of standard MNIST digits and evaluate the resulting models on rotated and $\GL$-transformed versions of the test dataset, and, in a second experiment, we train and evaluate various models on RotMNIST measurements.
In summary, we observe that especially for sparse angles the data efficiency of our proposed method still holds, while its equivariance cannot be guaranteed and does not improve with an increasing number of measurement angles in the limited range $[0,\frac{\pi}{2}]$.

\subsection{Thickness Regression of Tubes}

In addition to the above classification task, we also test a regression setting that is motivated by our initial example: the challenge of estimating the minimal and maximal thickness of a tube based on fan-beam data measured from only two angles, namely $0^\circ$ and $85^\circ$.
To simulate this task, we compute a dataset of elliptical rings, constructed from two ellipses with identical center point but randomly different rotation angles and radii.
We add $5\%$ Gaussian noise to the analytically computed sinogram, which is closer to real-world scenarios avoiding the simulation of forward measurements from given discrete data.
As ground truth, we approximate the minimal and maximal thickness of the tube by computing the length of the intersection with a line going through the center point of the ellipses.
For comparison, we compute reconstructed images of $129\times 129$ pixels using the SIRT scheme and compare to group-equivariant models that are trained on these reconstructions.
To evaluate the generalization performance, we evaluate different training dataset sizes, where all models are trained for $3000$ epochs on a batch size of $8$. In the initial residual block of our model, which is constructed to be equivariant to the induced representations of $\SE$, we sample $2700$ points for the estimation of the integrals.

Table~\ref{tab:ellipses_comparison} lists the obtained test errors.
We observe that, for dataset sizes below $8000$ training samples, our method outperforms both reconstruction-based approaches on the unseen data.
These results give further indication towards improved generalization properties and show the data-efficiency of our approach.

\begin{table}[t]
\footnotesize
\caption{Regression results on the generated ellipse dataset for varying amount of training data, where {\em MSE} refers to the mean squared error for both the minimal and maximal thickness. The methods 'GrouPy' and 'LieConv' act on previously computed classical reconstructions, while our methods directly handles the indirect measurement.}
\begin{center}
\begin{tabular}{@{}cccccccccc@{}}
\toprule
& \multicolumn{3}{c}{\textbf{MSE} $\times 10^{-3}$} & \multicolumn{3}{c}{\textbf{error at} $d_{\text{min}}$ $\times 10^{-2}$} & \multicolumn{3}{c}{\textbf{error at} $d_{\text{max}}$ $\times 10^{-2}$} \\   \cmidrule(lr{1em}){2-10}
dataset size & 
  \multicolumn{1}{c}{ours} &
  \multicolumn{1}{c}{GrouPy} &
  \multicolumn{1}{c}{LieConv} &
  \multicolumn{1}{c}{ours} &
  \multicolumn{1}{c}{Groupy} &
  \multicolumn{1}{c}{LieConv} &
  \multicolumn{1}{c}{ours} &
  \multicolumn{1}{c}{GrouPy} &
  \multicolumn{1}{c}{LieConv} \\ \midrule
1000 & \bf{0.93}      & 5.59      & 1.51     & \bf{1.95}      & 2.80       & 2.72     & \bf{2.49}      & 3.60       & 3.23     \\
2000 & \bf{0.81}      & 0.92      & 1.56     & \bf{1.80}       & 2.01      & 2.68     & \bf{2.35}      & 2.40       & 3.30      \\
4000 & \bf{0.70}      & 0.85      & 1.51     & \bf{1.67}      & 1.97      & 2.88     & \bf{2.18}      & 2.35      & 3.13     \\
8000 & 0.67      & \bf{0.51}      & 1.44     & 1.63      & \bf{1.50}       & 2.65     & 2.17      & \bf{1.82}      & 3.15     \\ \bottomrule
\end{tabular}
\end{center}
\label{tab:ellipses_comparison}
\end{table}

\section{Conclusion}

We have introduced neural networks that incorporate symmetries present in indirect measurements and theoretically investigated the connection between operators and symmetries.
Our key Theorem~\ref{thm:equivariance} precisely characterizes symmetries induced by given operators and, vice versa, determine all operators   that share the same symmetries.
Based on these insights we introduced data-efficient neural networks that are tailored to indirect measurements and can handle sparse inverse problems.
In particular, building upon our definition of generalized domain transforms, we were able to consider a class of symmetries that has not been treated before and shows new characteristics.
The effectivity of our approach is demonstrated on a classification and a regression task.

We want to stress that our work is meant to serve as a first attempt at the treatment of equivariance on indirectly measured data.
Several open questions and possible extensions arise both on the theoretical and the numerical side.
One direction could be the application and analysis of our approach in end-to-end reconstructions for sparse measurements.
In this setting, Theorem~\ref{thm:equivariance} indicates that a large part of the reconstruction operator is already encoded in the equivariance of our network.
Another important aspect is the incorporation of uncertainties and inexactness in the forward operator or its action on underlying symmetries in the source space. In this context, a Bayesian setting could help to also treat symmetries in noisy scenarios in a rigorous way, which is beyond the scope of this work.
Finally, an alternative to our approach of building equivariant network architectures on the whole measurement space~$\Y$ could be the construction of networks $\NN_\theta$ for discrete measurements in $\Yv$ such that the whole process $\NN_\theta\circ\S\circ\A: \X \to \Z$ is approximately equivariant.
We expect that this can be realized by training with augmented loss functions along with additional assumptions on the data distribution, which points to an interesting direction for future research.

%\clearpage
\vfill
%%%%%%%%%%%%%%%%%%%%%%%%%%%%%%%%%%

%\bibliographystyle{siamplain}
%\bibliography{references}

\end{document}